\newcommand{\ifff}{ \ if and only if \ \ \ {}}
\newtheorem{theorem}{Theorem}
\newcommand{\imp}{\Rightarrow}
\newcommand{\conj}{\wedge}
\newcommand{\Conj}{\bigwedge}
\newcommand{\cart}{\times}    
\newcommand{\henk}[4]{
 {\left.
 \begin{array}{ll}
 \mbox{{$\forall #1\ \exists #2$}} \\
 \mbox{{$\forall #3\ \exists #4$}}
 \end{array}
 \right.} }
\newcommand{\vp}{\varphi}
\newcommand{\ntj}[2]{{{#1}_1,\ldots,{#1}_{#2}}}
\newcommand{\mc}[1]{\mathcal{#1}}
\newcommand{\Henk}{{\sf H}}
\newenvironment{proof}{{\bf Proof.}}{$\Box$\\}
\newcommand{\ulongmapsto}[1]{\underset{#1}{\longmapsto}}
\title{One Henkin Quantifier  in the empty vocabulary suffices for undecidability}
\author{ Konrad Zdanowski,\\
Faculty of Mathematics and Natural Sciences,  School of Exact Sciences,\\
Cardinal Stefan Wyszy{\'n}ski University in Warsaw }
\begin{document}
\maketitle
\begin{abstract}
We prove that there are single Henkin 
quantifiers such that first order logic
augmented by one of these quantifiers is undecidable
in the empty vocabulary. Examples of such quantifiers
are given. 
\end{abstract}
\section{Introduction}

In first order logic an existential variable  $y$ depends on all
universal variables $x$ such that $y$ lies in the scope of $x$. 
It follows that we can not express that in a predicate $P(x,y,z,w)$
a variable $y$ depends only on $x$ and $w$ depends only on $z$. 
To overcome this restriction Henkin proposed to use quantifiers
prefixes in which the ordering of 
variables is ony partial, not linear. Then, we could express 
dependences as above with  the following prefix:
$$
\henk{x}{y}{z}{w} P(x,y,z,w).
$$
Henkin, or branched, quantiers are a way of introducing dependences between variables
which are not expressible in first order logic. They occurred to be 
an interesting extension of first order logic which do not introduce
the full power of second order quantification.  Henkin quantifiers were examined 
in various contexts. Jaako Hintikka consider the following sentence  
of natural language:
\begin{center}
``{\em Some relative of each villager and some relative of each townsman hate each other.}''
\end{center}
His claim, known as Hintikka's Thesis, states that the logical form of the sentences as above
essentially requires branched quantification. We refer to Gierasimczuk and Szymanik \cite{GS} for a recent
discussion of Hintikka's Thesis. In complexity theory branched quantifiers
were examined as a way of capturing complexity classes by logics, see 
Blass and Gurevich \cite{BG} and Ko{\l}odziejczyk \cite{K}.

In this paper we prove that there are single Henkin quantifiers 
$H$ which give undecidable extenstion of first order logic already 
in the empty voca\-bulary. Previous results by Krynicki and Mostowski and 
by Mostowski and Zdanowski showed this property only for infinite classes 
of Henkin quantifiers.
\bigskip

\section{Basic notions}
We investigate different logics with Henkin quantifiers. The simplest
Henkin quantifier has the form 
$$
\henk{x}{y}{z}{w}.
$$
Intuitively, it expresses that the choice of $y$ does not 
depend on the variable $z$ and the choice of $w$ does not 
depend on $x$. 
More formally we can describe 
the Henkin prefix as an ordered triple
 $Q=(A,E,D)$, where $A$ and $E$ are disjoint sets of universal and
existential variables, respectively, and $D\subseteq A\cart E$ is
a dependency relation. 
We say that a variable $y\in E$ depends on a variable $x\in A$ if $(x,y)\in D$.
Further on, we will make no differences between 
quantifiers and quantifier prefixes.

\noindent{\bf Example.}\\ $\henk{x}{y}{z}{w}=
(\{x,z\},\{y,w\},\{(x,y),(z,w)\})$. 

\noindent
We denote the above quantifier by $\Henk$.

The inductive step in the  definition of semantics for
logic with Henkin quantifiers is as follows. Let 
$Q=(\{\ntj{x}{n}\},\{\ntj{y}{k}\},D)$. Then,
$$M\models Q\varphi(\ntj{x}{n},\ntj{y}{k})$$
$$\textrm{\ifff}$$
$$\textrm{there are operations } \ntj{f}{k} \textrm{ on } M 
\textrm{ such that }$$
$$(M,\{f_i\}_{i\leq k})\!\models\!\forall {\bf \overline{x}}
 \varphi({\bf \overline{x}},f_1(\overline{x}_1),
 \ldots,f_k(\overline{x}_k)),$$
 where ${\bf \overline{x}}$ are all universal variables
 in $Q$ and $\overline{x}_i$ are variables on which $y_i$ depends in $Q$.

By $\mc{H}$ we denote the family of all Henkin quantifiers.
For  a family of Henkin quantifiers $\mc{Q}$, $L(\mc{Q})$
is an extensions of the first order logic 
by quantifiers  in $\mc{Q}$. For a single quantifier $Q$ we
write $L(Q)$ for $L(\{Q\})$.

The logic with Henkin quantifiers was shown to be a strengthening
of first  order logic by Ehrenfeucht. He showed  that one can define the
finitness of the universe by the following sentence.
$$
\neg\exists t\henk{x}{y}{z}{w}(y=w\equiv x=z)\conj(t\not=y).
$$
The sentence above is equivalent to the second order sentence 
$$
\neg\exists t\exists f\forall x, y (f(x)=f(y)\imp x=y)\conj
\forall x(t\not=f(x))
$$ 
which states that there is no injection
of the universe of a given model into itself which is not 
a bijection.  

We have the following theorem relating the semantical
power of logic with Henkin quantifiers with that of second
order logic. The first dependence was independently proved
by Enderton and Walkoe, the second is due to Enderton.
 \begin{theorem}[see \cite{E}, \cite{W}]
$\Sigma^1_1 \leq L(\mc{H}) \leq \Delta^1_2$, where $\mc{H}$ is the
family of all Henkin quantifiers.
 \end{theorem}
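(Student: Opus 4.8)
The plan is to prove the two inclusions separately; each is a cited result (the first due independently to Enderton and Walkoe, the second to Enderton), so I only indicate the arguments.

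For $\Sigma^1_1\leq L(\mc{H})$ I would first pass to the second-order existential Skolem normal form: every $\Sigma^1_1$ sentence is equivalent, on all structures with at least two elements, to one of the form $\exists f_1\cdots\exists f_m\,\forall x_1\cdots\forall x_n\,\alpha$ with $\alpha$ quantifier-free. One prenexes the first-order part and Skolemises its existential quantifiers; each bound existential variable becomes a single Skolem term $f_j(\overline{x}_{(j)})$, where $\overline{x}_{(j)}$ is the block of universal variables in whose scope it lies (so $f_j$ occurs in $\alpha$ applied to just one tuple), and the existentially quantified relations are replaced by characteristic functions, with the two truth values named by a pair of fresh existentially quantified constants; one-element structures cause no trouble, since there the sentence is equivalent to a first-order one and can be split off by a first-order disjunct. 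Such a normal form \emph{is} a Henkin sentence: take $Q=(\{x_1,\dots,x_n\},\{y_1,\dots,y_m\},D)$ with $(x_i,y_j)\in D$ exactly when $x_i$ occurs in $\overline{x}_{(j)}$, and obtain $\alpha'$ from $\alpha$ by substituting $y_j$ for each term $f_j(\overline{x}_{(j)})$; by the semantic clause for Henkin prefixes, $Q\alpha'$ is equivalent to the normal form. This even gives the sharper fact that a single Henkin quantifier with a quantifier-free matrix already suffices.

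For $L(\mc{H})\leq\Delta^1_2$ I would induct on the construction of $L(\mc{H})$-formulas, maintaining the invariant that every formula is equivalent both to a $\Sigma^1_2$ and to a $\Pi^1_2$ formula. Atomic formulas are first-order; negation is immediate, since $\Delta^1_2$ is closed under complement; and the cases $\conj,\disj$, as well as the first-order quantifiers $\exists x,\forall x$, rest on the standard closure properties of $\Sigma^1_2$ and $\Pi^1_2$: a first-order $\forall x$ is moved inside an outer $\exists\overline{S}$ block by giving each $S_i$ one extra argument place for $x$ (and dually for $\Pi^1_2$), while a first-order $\exists x$ is absorbed into a $0$-ary function variable.

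The real work is the Henkin-prefix step. Given $\psi$ that is $\Delta^1_2$ and $Q=(\{\overline{x}\},\{\overline{y}\},D)$, the semantic clause makes $Q\psi$ equivalent to $\exists f_1\cdots\exists f_k\,\forall\overline{x}\,\psi(\overline{x},f_1(\overline{x}_1),\dots,f_k(\overline{x}_k))$. Starting from a $\Sigma^1_2$ presentation $\exists\overline{S}\,\forall\overline{T}\,\beta$ of $\psi$, one pulls the witnesses $\overline{S}$ (which a priori depend on $\overline{x}$) out over $\forall\overline{x}$ by the same bundling device and merges them with $f_1,\dots,f_k$, obtaining a $\Sigma^1_2$ presentation of $Q\psi$. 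The hard part, which I expect to be the main obstacle, is the $\Pi^1_2$ presentation; equivalently, that $\neg Q\psi\equiv\forall f_1\cdots\forall f_k\,\exists\overline{x}\,\neg\psi(\overline{x},f_1(\overline{x}_1),\dots,f_k(\overline{x}_k))$ is again $\Sigma^1_2$. Here the outermost second-order quantifier over the Skolem functions does not commute past the second-order quantifiers coming from $\psi$, and a naive translation yields only $\Sigma^1_3/\Pi^1_3$, with the level rising each time a Henkin quantifier sits beneath a negation. To keep the second-order quantifier alternation at two I would exploit the very restricted form of the Skolem occurrences (each $f_i$ is applied only to a subtuple of the universal variables governing it), together with the fact that $\psi$, being $\Delta^1_2$, also admits a $\Sigma^1_2$ presentation, and rearrange the prefix so that the Skolem-function quantifiers are amalgamated with a single existential, respectively universal, second-order block. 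This amalgamation is the crux of Enderton's upper bound.
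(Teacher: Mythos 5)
The paper does not prove this theorem---it is quoted from Enderton and Walkoe with citations---so your sketch has to be judged on its own terms rather than against an argument in the text. Your treatment of the first inclusion is the standard one and is essentially right, with one routine omission: after the existentially quantified relations are replaced by characteristic functions, those functions generally occur in $\alpha$ applied to several different tuples of terms, so the assertion that each $f_j$ occurs applied to just one tuple of universal variables does not come for free from Skolemization. One must first flatten terms by introducing additional universal variables and then devote one row of the Henkin prefix to each occurrence, adding coherence clauses of the form $\overline{x}=\overline{x}'\Rightarrow y=y'$ to force the rows to define the same function---exactly the device this paper itself uses in its formulas $\varphi_0$ and $\psi$. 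This is fixable in a line or two, but as stated the reduction to a single row per function is not justified.

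The second inclusion contains a genuine gap, and you have located it precisely without closing it. Your induction needs $Q\psi$ to be $\Pi^1_2$ whenever $\psi$ is $\Delta^1_2$, equivalently that $\neg Q\psi\equiv\forall\overline{f}\,\exists\overline{x}\,\neg\psi(\overline{x},f_1(\overline{x}_1),\dots,f_k(\overline{x}_k))$ is $\Sigma^1_2$. Inserting the $\Sigma^1_2$ presentation of $\neg\psi$ yields a prefix of shape $\forall\overline{f}\,\exists\,\forall$, and the exchange of $\exists\overline{h}\,\forall\overline{f}$ for $\forall\overline{f}\,\exists\overline{h}$ that your ``amalgamation'' requires is not a valid second-order prefix manipulation in general; justifying it naively would call for third-order Skolem functionals. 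The only reason it can be carried out here is the special feature you mention in passing---that the matrix depends on the $f_i$ only through the finitely many first-order values $f_i(\overline{x}_i)$ chosen by the existential $\overline{x}$---but you never say how that feature is exploited, and that exploitation is the entire content of Enderton's upper bound. As written, your induction establishes only that $L(\mc{H})$-formulas lie somewhere in the analytical hierarchy, with the level rising each time a Henkin prefix sits under a negation; the $\Delta^1_2$ bound is announced as ``the crux'' but not proved.
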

It should be added that all the inequalities above are strict.
The first one is obvious since $L({\mc{H}})$ is closed
on the negation and $\Sigma^1_1$ is not. The second one
was proven by M.~Mostowski in \cite{Mb} by means of 
truth definitions. For a simpler
argument which works for the empty vocabulary see \cite{MZb}.


We will consider the following kinds of Henkin quantifiers.
By ${\sf H}_n x_1\ldots x_n\ y_1\ldots y_n$  we denote 
the quantifier
$$
\begin{array}{c}
\forall x_1\ \exists y_1\\
\forall x_2\ \exists y_2\\ 
\ldots\ldots\ldots \\ \forall x_n\
\exists y_n\\
\end{array}
$$

By ${\sf E}_n x_1\ x_2\ y_1\ldots y_n\ z_1\ldots z_n$ we denote
$$
\begin{array}{c}
\forall x_1\ \exists y_1\ldots y_n\\ \forall x_2\ \exists
z_1\ldots z_n\\
\end{array}
$$

By ${\sf H}_\omega$ we denote the family of Henkin quantifiers
$\{{\sf H}_n\}_{n=2,3,\ldots}$ and similarly for ${\sf E}_\omega$.

Clearly, each quantifier ${\sf E}_n$ can be defined in the
logic with quantifier ${\sf H}_n$. However, it is not known if for each
$k$ there is $n$ such that $L({\sf H}_k)\leq L({\sf E}_n)$.

Now, we present known results
on decidability of different logics with
Henkin quantifiers. Our aim is to outline 
for these logics the
boundary between decidable and undecidable.

\begin{theorem}[\cite{KL}]
Let $\sigma$ be a monadic vocabulary. Logic $L_\sigma({\sf H}_2)$
is decidable.
\end{theorem}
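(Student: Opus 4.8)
\medskip
\noindent\textit{Sketch of an approach.} The plan is to establish a profile-invariance (small model) property for $L_\sigma({\sf H}_2)$ that refines the classical decidability of the monadic predicate calculus, and then to read decidability off from it. Fix a monadic vocabulary $\sigma=\{P_1,\dots,P_k\}$. The $2^k$ atomic $1$-types (``colours'') partition any $\sigma$-structure $M$; let its \emph{profile} $\pi(M)=(n_1,\dots,n_{2^k})$ record the number $n_i\in\mathbb{N}\cup\{\infty\}$ of elements of colour $i$, and for $t\in\mathbb{N}$ let $\pi_t(M)$ be $\pi(M)$ with each finite entry $n_i$ replaced by $\min(n_i,t)$, the value $\infty$ being left untouched (so that ``large finite'' is still distinguished from ``infinite''). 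I would prove: for every sentence $\varphi\in L_\sigma({\sf H}_2)$ there is a threshold $t(\varphi)$, computable from $\varphi$, such that the truth of $\varphi$ in a model depends only on its truncated profile $\pi_{t(\varphi)}$. There are finitely many truncated profiles, each realised by an explicitly constructible structure, so the set $S_\varphi$ of truncated profiles of models of $\varphi$ is finite and computable; then $\varphi$ is satisfiable iff $S_\varphi\neq\emptyset$ and valid iff $S_\varphi$ is the set of all truncated profiles, both decidable.

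The invariance is proved by induction on the construction of $\varphi$, carried out for subformulas with free (``parameter'') variables whose colours are fixed. The Boolean and ordinary first-order cases reproduce the standard Ehrenfeucht--Fra\"{\i}ss\'e argument for monadic structures, raising the threshold by a computable amount. The new case is a subformula ${\sf H}_2\,\psi(\overline{u},x,y,z,w)$ with parameters interpreted as a tuple $\overline{a}$ in $M$; by the semantics given above this means
$$
\exists f\,\exists g\ \forall x\,\forall z\ \psi(\overline{a},x,f(x),z,g(z)).
$$
Since $\psi$ is a Boolean combination of unary atoms and equalities among its variables, it factors through the \emph{full type} of the tuple $(x,f(x),z,g(z),\overline{a})$ --- the colours of the entries together with their equality pattern --- of which there are only boundedly many; so $\psi$ amounts to requiring that this full type always lie in a fixed ``good'' set.

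The core is a normalisation lemma for the Skolem functions: if some $(f,g)$ witnesses the displayed formula over $M$, then so does some $(f',g')$ in which $f'$ (resp.\ $g'$) is determined by the colour of its argument outside a bounded ``exceptional'' set containing $\overline{a}$, and in which the only coincidences among the values $x,f'(x),z,g'(z)$ (for a given pair) are the unavoidable ones, provided every target colour has at least $t$ elements. Granting this, deciding whether a normalised $(f',g')$ works splits into a finite case analysis --- diagonal pairs $x=z$ versus off-diagonal pairs $x\neq z$, and within each the few possible equality patterns among the images --- whose outcome depends only on the colours of $\overline{a}$ and on which colours are ``large'' in $M$, i.e.\ only on $\pi_t(M)$. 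Accommodating the parameters contributed by nested occurrences of ${\sf H}_2$ costs another computable increase of $t$, so $t(\varphi)$ remains computable.

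I expect the normalisation lemma to be the real obstacle. The equality atoms of $\psi$ couple the two Skolem functions (through atoms such as $y=z$, $x=w$ or $y=w$), so one cannot choose $f$ and $g$ colour-by-colour independently; one must show that an arbitrary witnessing pair can be deformed --- never creating a bad full type --- into one that is sensitive only to colours, to the bounded parameter set, and to boundedly many further ``anchor'' elements, while simultaneously avoiding forced equalities between images of $f$ and images of $g$ on off-diagonal pairs. Making this argument uniform enough to survive nesting, so that the bound $t(\varphi)$ stays under computable control, is the delicate point; the remaining steps are the routine monadic small-model bookkeeping.
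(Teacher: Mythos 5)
This theorem is quoted from Krynicki and Lachlan \cite{KL}; the paper offers no proof of it, so your sketch can only be judged on its own terms. Your overall strategy --- show that the truth of an $L_\sigma({\sf H}_2)$-sentence depends only on a computably truncated colour profile in which $\infty$ is kept distinct from every finite value, then reduce satisfiability and validity to a finite search over truncated profiles --- has the right shape: it amounts to showing that over monadic vocabularies ${\sf H}_2$ adds nothing beyond first-order logic with the quantifier ``there exist infinitely many'', which is one way to package the Krynicki--Lachlan analysis.

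However, the normalisation lemma, which you rightly identify as the crux, is false as you state it. A function ``determined by the colour of its argument outside a bounded exceptional set'' has bounded range. But Ehrenfeucht's sentence, reproduced in Section 2 of this paper, has matrix $(y=w\equiv x=z)\conj(t\neq y)$ and forces every witnessing pair to act as a single injective, non-surjective map: if $f'$ were colour-determined off a finite set, then two distinct same-coloured arguments $x\neq z$ outside that set would give $y=w$ while $x\neq z$, violating the matrix. So on an infinite universe no colour-determined witness exists, although the sentence is satisfiable there; your lemma's proviso that every target colour be large does not save it. The normal form must therefore admit, among the boundedly many behaviours of a Skolem function on a colour class, ``maps colour $i$ injectively into colour $j$'' and not only ``takes a fixed value attached to colour $i$'' --- this is precisely where the $\infty$ entries of your profile earn their keep. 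With that repair the plan looks viable, but the combinatorial heart --- which equality patterns between the $f$-side and the $g$-side can be realised simultaneously, your ``coupling'' worry, uniformly under nesting --- is still entirely outstanding, so what you have is a program rather than a proof.
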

\begin{theorem}[\cite{KL}]\label{Hun}
Let $\sigma$ contains one unary function symbol. Then 
logic $L_\sigma({\sf H}_2)$ is undecidable.
\end{theorem}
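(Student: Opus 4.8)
The plan is to show that $L_\sigma({\sf H}_2)$ is undecidable, for $\sigma=\{F\}$ with $F$ unary, by a computable reduction from a non-recursive set --- the set of Turing machines that fail to halt on empty input --- to the satisfiability problem for $L_\sigma({\sf H}_2)$. Two mechanisms carry the argument. First, a single ${\sf H}_2$ together with $F$ is used to pin down the standard successor structure $(\mathbb{N},S)$, much as Ehrenfeucht's sentence above pins down finiteness. Second, and this is the source of the power, over a model that is already $(\mathbb{N},S)$ a single ${\sf H}_2$ behaves like one existential second-order function quantifier: prefixing an ${\sf H}_2$-block with ordinary universal quantifiers inflates the arity of its Skolem functions, and adding the conjunct $y_1 = y_2$ to the matrix collapses the two Henkin--Skolem functions to a single genuine function of that arity, so that for any first-order $\theta$
\[ \forall \bar a\; {\sf H}_2 x_1 x_2 y_1 y_2\,\bigl(y_1 = y_2 \wedge \theta(\bar a,y_1)\bigr) \quad\text{expresses}\quad \exists \pi\,\forall \bar a\,\theta\bigl(\bar a,\pi(\bar a)\bigr). \]
This is in accordance with the inclusion $\Sigma^1_1\leq L(\mc{H})$ recorded above; and since $L_\sigma({\sf H}_2)$ is closed under negation we also get the dual ($\Pi^1_1$) universal function quantifier, and, by nesting ${\sf H}_2$-prefixes, a bounded number of such function quantifiers at once (the inner Skolem functions may then depend on the outer data as well).

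\emph{Step 1: capturing $(\mathbb{N},S)$.} Let $\mathrm{Succ}$ be the conjunction of the first-order sentences ``$F$ is injective'' and ``there is exactly one $z$ with $\forall x\,(F(x)\neq z)$'' with an $L_\sigma({\sf H}_2)$-sentence $\mathrm{Reach}$ saying that every element equals $F^{n}(z_0)$ for the unique non-value $z_0$ and some $n\geq 0$. A routine argument shows that a model of the first two conjuncts splits into $F$-components, each an $\mathbb{N}$-chain, a $\mathbb{Z}$-chain, or a finite cycle, with exactly one $\mathbb{N}$-chain; $\mathrm{Reach}$ then forbids the $\mathbb{Z}$-chains and cycles, so the models of $\mathrm{Succ}$ are exactly the structures isomorphic to $(\mathbb{N},S)$. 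I would write $\mathrm{Reach}$ as the negation of the $\Sigma^1_1$-statement ``$\exists x\,\exists S\,\bigl(z_0\in S \wedge F[S]\subseteq S \wedge x\notin S\bigr)$'', coding $S$ by its characteristic function with values among the two elements $z_0$ and $F(z_0)$, and realising the set quantifier by one ${\sf H}_2$ as above; the outer negation keeps $\mathrm{Reach}$ within $L_\sigma({\sf H}_2)$.

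\emph{Step 2: encoding computations.} Now work inside a model of $\mathrm{Succ}$, so the universe is $0,S(0),S^{2}(0),\dots$ with $S=F$ and fixed elements name the finitely many symbols and states of a given Turing machine $M$. Using ${\sf H}_2$ (nested, as two auxiliary functions are needed) I would assert the existence of a pairing function $J\colon\mathbb{N}^{2}\to\mathbb{N}$ --- the bijection enumerating $\mathbb{N}^{2}$ in Cantor order, pinned down uniquely by finitely many first-order conditions in $F$ and $J$ --- together with a function $H$ coding an \emph{infinite, non-halting} computation history of $M$, where $H(J(t,i))$ records the content of tape cell $i$ at time $t$ with the head/state information, the initial configuration and the step-by-step transition relation being expressed by a first-order matrix in $F,J,H$ and the halting state being forbidden. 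Call the resulting sentence $\chi_M$. Then $\mathrm{Succ}\wedge\chi_M\in L_\sigma({\sf H}_2)$ is satisfiable precisely when $M$ does not halt on empty input; as $M\mapsto\mathrm{Succ}\wedge\chi_M$ is computable, the satisfiability problem for $L_\sigma({\sf H}_2)$, hence $L_\sigma({\sf H}_2)$ itself, is undecidable. (Alternatively, using only $J$ one interprets Robinson arithmetic by sequence-coding and reduces from the undecidability of first-order arithmetic.)

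\emph{The main obstacle} is Step 2 in this impoverished vocabulary. A single ${\sf H}_2$ is essentially one existential function quantifier, and Henkin--Skolem functions introduced in different sub-formulas cannot refer to one another, so the pairing, the set used in $\mathrm{Reach}$, and the computation history must each be packed into essentially one function whose global correctness is then certified by a purely first-order matrix, or else threaded through the dependency set of a nested ${\sf H}_2$-prefix. Making this packing go through while staying in a vocabulary with a single unary symbol is the delicate core of the proof; the same difficulty, sharpened by the complete absence of non-logical symbols, is what the present paper will overcome for the empty vocabulary.
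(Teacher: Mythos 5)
First, note that the paper does not prove this theorem: it is quoted from Krynicki--Lachlan \cite{KL}, and the paper only remarks that the proof characterizes the standard successor structure up to isomorphism in $L_\sigma({\sf H}_2)$ and then defines addition and multiplication from the successor. Your skeleton (Step 1: pin down $(\mathbb{N},S)$; Step 2: interpret computation/arithmetic) matches that outline, and your Step 1 is essentially the standard Ehrenfeucht-style argument and is fine in spirit. The problem is your central mechanism. In the displayed formula
$\forall \bar a\; {\sf H}_2 x_1 x_2 y_1 y_2\,(y_1=y_2 \conj \theta(\bar a,y_1))$
the \emph{unconditional} conjunct $y_1=y_2$ forces $f_1(x_1)=f_2(x_2)$ for all $x_1,x_2$, i.e.\ both Skolem functions are constant (for each fixed $\bar a$); the sentence collapses to the first-order $\forall\bar a\,\exists y\,\theta(\bar a,y)$. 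That is indeed classically equivalent to $\exists\pi\forall\bar a\,\theta(\bar a,\pi(\bar a))$, but only because $\pi$ occurs at the single argument $\bar a$ -- this is degenerate $\Sigma^1_1$ and gives you no genuine second-order function quantifier. The useful device is the \emph{guarded} conjunct $x_1=x_2\imp y_1=y_2$, which identifies the two Skolem functions and lets you evaluate one function at the two independently quantified points $x_1$ and $x_2$ within a single matrix. This limitation -- at most two evaluation points of each introduced function per application of ${\sf H}_2$ -- is exactly why the theorem is nontrivial and why $\Sigma^1_1\leq L(\mc{H})$ requires the whole family $\mc{H}$ rather than the single quantifier ${\sf H}_2$ (were your mechanism sound, the main theorem of this very paper would be immediate for ${\sf H}_2$ in the empty vocabulary).

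This defect is fatal precisely where the work has to be done. Your Step 2 needs to certify that $J$ is the Cantor pairing and that $H$ is a correct computation history, which requires relating $H(J(t,i))$ to $H(J(t+1,i-1))$, $H(J(t+1,i))$, $H(J(t+1,i+1))$, etc., i.e.\ many simultaneous evaluations of the same Skolem functions, and your own (correct) observation that Henkin--Skolem functions introduced in different subformulas cannot refer to one another rules out splitting these constraints across separate ${\sf H}_2$-blocks. You acknowledge this as ``the delicate core of the proof'' but do not carry it out, so the reduction is not established. The actual \cite{KL} argument has to thread everything through the single unary symbol $F$ (as successor) together with the two-point evaluation that one ${\sf H}_2$ affords, defining $+$ and $\times$ from $F$; supplying that construction, or an equivalent encoding, is the missing content.
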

\begin{theorem}[\cite{KM}]
Let $\sigma$ be an infinite monadic vocabulary.
Then $L_\sigma({\sf H}_4)$ is undecidable.
\end{theorem}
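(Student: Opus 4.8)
The plan is to reduce an undecidable problem — say the (finite or unrestricted) satisfiability problem for first order logic over an infinite monadic vocabulary, which is undecidable by classical results and is already close to Theorem~\ref{Hun} — to satisfiability in $L_\sigma({\sf H}_4)$, where $\sigma$ is infinite monadic. The key difficulty is that the target logic has only unary predicates available, so we cannot directly talk about a binary relation or a function; instead we must code a function (or a linear order, or the successor structure of $\omega$) using the extra expressive power of ${\sf H}_4$ together with the monadic predicates.

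First I would use the Ehrenfeucht-style trick recalled in the excerpt: the Henkin prefix ${\sf H}_2$ (hence ${\sf H}_4$, which subsumes it) lets us assert, in the empty vocabulary, the existence of a function $f$ on the universe satisfying $f(x)=f(y)\imp x=y$, i.e.\ an injection, and by combining this with monadic predicates we can force $f$ to be a bijection with no fixed points, or more usefully a successor-like function whose orbit structure is controlled. The point of having ${\sf H}_4$ rather than ${\sf H}_2$ is that with four universal/existential pairs one can quantify over \emph{several} functions simultaneously and impose commutation or compatibility conditions between them — for instance that two coded functions $f,g$ satisfy $f\circ g = g\circ f$ or that $g$ is a ``square root'' of $f$ — which is exactly the kind of assertion one needs to pin down the intended model up to the relevant equivalence. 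The infinitely many monadic predicates $P_0,P_1,P_2,\dots$ then serve as an unbounded supply of ``colours'' or ``tape symbols'': we use them to encode, on the orbit of a distinguished element under the coded successor function, the configurations of a Turing machine (or the cells of a tiling, or the atomic diagram of a monadic structure).

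The concrete steps, in order, are: (1) fix the source undecidable problem and its instances; (2) write down an $L_\sigma({\sf H}_4)$-sentence $\psi_0$ whose models are forced (via the Henkin prefix) to contain a copy of $(\omega,\mathrm{succ})$ or an equivalent discretely-ordered skeleton, using the monadic predicates to mark the starting point and to rule out spurious orbits; (3) show conversely that the intended model does satisfy $\psi_0$, so that $\psi_0$ is consistent; (4) given an instance $I$ of the source problem, uniformly construct a first-order (over $\sigma$) sentence $\chi_I$ expressing the local consistency conditions of $I$ along the coded skeleton — transition rules, tiling adjacency, or the ``one step'' conditions of the monadic structure — these are all expressible with unary predicates plus the successor relation that $\psi_0$ has made available; (5) argue that $\psi_0\conj\chi_I$ is satisfiable in $L_\sigma({\sf H}_4)$ if and only if $I$ is a positive instance, which gives the reduction and hence undecidability.

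The main obstacle I expect is step~(2): controlling the orbit structure of the Henkin-coded functions with only monadic predicates. A single ${\sf H}_2$ gives an injection but not a bijection, and even a bijection can decompose into finite cycles and $\mathbb{Z}$-chains rather than a single $\omega$-chain, which would let the machine computation ``wrap around'' or run on garbage and break the reduction. The role of the fourth pair of quantifiers in ${\sf H}_4$, and of carefully chosen monadic colour classes, is precisely to exclude these pathologies — e.g.\ by forcing the existence of a second function whose interaction with the first is only possible when the orbit is infinite and acyclic, or by using a colour to mark a would-be least element and deriving a contradiction from any finite cycle. Getting a clean, uniform such sentence — and verifying that the honest model still satisfies it — is the delicate heart of the argument; the Turing-machine or tiling encoding in steps~(4)--(5) is then routine monadic first-order bookkeeping.
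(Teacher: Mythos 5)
This theorem is quoted from Krynicki and Mostowski \cite{KM}; the paper does not reprove it, but the technique it builds on (and adapts in Theorem~\ref{existsn}) is a reduction from the word problem for finitely presented semigroups: the Henkin prefix supplies the generating unary functions $f_a,f_b$ together with the ``consistency'' rows $x_i=x_j\imp y_i=y_j$, and the infinitely many monadic predicates are used to mark the intermediate values of the long compositions $tr(v)(x)$, so that equations between arbitrarily long words can be stated with the \emph{fixed} quantifier ${\sf H}_4$. Your proposal takes a different route (Turing machines or tilings on a Henkin-coded successor structure), which is not in itself disqualifying, but as written it has two concrete defects. First, your opening choice of source problem is wrong: first-order logic over a monadic vocabulary (even an infinite one, even with equality) is \emph{decidable} by the classical result of L\"owenheim--Behmann, so there is nothing there to reduce from; you must start from tilings, Turing machines, or the semigroup word problem, and you never actually fix one.

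Second, and more seriously, the skeleton you propose to build in steps (2)--(5) --- a single $\omega$-chain $(\,\omega,\mathrm{succ}\,)$ coloured by the monadic predicates, with ``local consistency conditions'' along the successor --- cannot yield undecidability: satisfiability of local successor constraints on colourings of an $\omega$-word is decidable (it sits inside B\"uchi's S1S). The entire difficulty of the theorem is concentrated in the ingredient you mention only in passing, namely forcing a \emph{second} function whose interaction with the first creates a two-dimensional grid (or, equivalently, realizing an undecidable semigroup presentation), and then checking that all the required conditions --- injectivity, functionality across rows, commutation or the defining equations, and the markers that rule out degenerate orbits --- fit simultaneously into a \emph{single} application of ${\sf H}_4$, which gives you only four $\forall x_i\,\exists y_i$ rows and hence a very tight budget of function-value terms per condition. (The paper needs twelve rows for this in the empty vocabulary; getting down to four is exactly what the infinite monadic vocabulary buys, and your sketch does not engage with that accounting at all.) Until step (2) produces something strictly richer than one coloured $\omega$-chain, step (5) fails: the sentence $\psi_0\wedge\chi_I$ would be decidable regardless of $I$.
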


The proof of theorem \ref{Hun} gives an up-to-isomorphism 
a characterization  
of the standard model of arithmetic in the language of $L_\sigma({\sf H}_2)$. 
An unary function symbol is intended
there to be a successor function. Similarly, definitions
of addition and multiplication by means of a successor
function are given. In \cite{MZa} it was observed that also for some finite
monadic vocabulary $\tau$ one obtain undecidable logic
 $L_\tau({\sf H}_4)$.

As far as the  empty vocabulary is concern it was not known
whether there exists a single Henkin quantifier $Q$ such that 
$L_\emptyset(Q)$ is undecidable.
The only undecidability results were established for 
the infinite families ${\sf H}_\omega$ (\cite{KM}) and ${\sf E}_\omega$ (\cite{MZa}).
\begin{theorem}[\cite{KM},\cite{MZa}]\label{Homega}
Logics $L_\emptyset({\sf H}_\omega)$ and $L_\emptyset({\sf
E}_\omega)$ are undecidable.
\end{theorem}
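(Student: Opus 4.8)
Both halves are cited from \cite{KM} and \cite{MZa}, and the same plan underlies them: characterize the standard model of arithmetic, up to isomorphism, \emph{in the empty vocabulary}, using the Henkin quantifiers to supply the missing arithmetical operations, and then translate arithmetical sentences faithfully. Since a model in the empty vocabulary is just a set, the target is a single sentence $\mathrm{Nat}$ (in $L_\emptyset({\sf H}_\omega)$, resp.\ $L_\emptyset({\sf E}_\omega)$) such that a set $M$ satisfies $\mathrm{Nat}$ exactly when $M$ is countably infinite and, moreover, the Henkin function choices witnessing $\mathrm{Nat}$ are forced to present on $M$ a copy of $(\mathbb{N};0,S,+,\cdot)$. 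Given such a $\mathrm{Nat}$ and a translation $\psi\mapsto\psi^\circ$ of arithmetical formulas that substitutes the $L_\emptyset$-definitions for $0,S,+,\cdot$, the sentence $\mathrm{Nat}\conj\psi^\circ$ is satisfiable if and only if $\mathbb{N}\models\psi$; since these logics are closed under negation, validity is then undecidable as well, and restricting $\psi$ to $\Sigma^0_1$ sentences already yields a many--one reduction of a non-recursive (indeed $\Sigma^0_1$-complete) set.

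\textbf{Building $\mathrm{Nat}$ for ${\sf H}_\omega$.} Recall that ${\sf H}_n x_1\ldots x_n\,y_1\ldots y_n\,\varphi$ means exactly ``$\exists f_1\ldots\exists f_n\,\forall x_1\ldots\forall x_n\ \varphi(\overline{x},f_1(x_1),\ldots,f_n(x_n))$'' with the $f_i$ unary; so by taking $n$ large one may existentially quantify at once a ``successor'' $S$, a ``zero'' $0$, a pairing function, and all the unary slices needed to present the graphs of $+$ and $\cdot$, imposing on them --- as purely universal conditions --- the recursion equations $a{+}0=a$, $a{+}S(b)=S(a{+}b)$, $a{\cdot}0=0$, $a{\cdot}S(b)=a{\cdot}b{+}a$, functionality, and the requirement that $S$ be injective with a unique non-value $0$ (already $L(\mc{H})$-expressible, as in Ehrenfeucht's infinity sentence). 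The genuinely delicate conjunct is \emph{standardness}: that every element of $M$ is some finite iterate $S^k(0)$. This is not first order, but it becomes expressible by re-using Ehrenfeucht's finiteness trick \emph{locally} --- Henkin-quantifying an auxiliary injection witnessing, for each $x$, the finiteness of the $\le$-initial segment $\{z:z\le x\}$, which is impossible when $x$ is nonstandard. I expect this to be the main obstacle, and it is also why a single small quantifier does not suffice: $L_\emptyset({\sf H}_2)$ is decidable, so genuinely larger members of ${\sf H}_\omega$ (and several of them, to carry all the function quantifiers and the local counting) are needed. Once $\mathrm{Nat}$ is in place, standardness forces the recursion equations to have a unique solution, necessarily the true $+$ and $\cdot$, so $(M,0,S,+,\cdot)\cong(\mathbb{N};0,S,+,\cdot)$ in every model of $\mathrm{Nat}$.

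\textbf{The ${\sf E}_\omega$ case.} Here ${\sf E}_n$ means ``$\exists f_1\ldots f_n\,\exists g_1\ldots g_n\,\forall x_1\forall x_2\ \varphi(x_1,x_2,f_1(x_1),\ldots,f_n(x_1),g_1(x_2),\ldots,g_n(x_2))$'': one still gets arbitrarily many function quantifiers, but the universal part has only the two variables $x_1,x_2$, and no pairing function is directly available. The same blueprint applies --- characterize $\mathbb{N}$ by a sentence $\mathrm{Nat}$, then translate arithmetic --- but the coding of $+$, $\cdot$ and of the standardness witness must be reorganized so that, as $x_1$ and $x_2$ range over $M$, the many values $f_i(x_1)$ and $g_i(x_2)$ play the role of entries of a table recording the needed $n$-ary information. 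Carrying this reorganization through is the additional obstacle in this half (the contribution of \cite{MZa}); once $\mathbb{N}$ is so characterized, the translation argument of the first paragraph applies verbatim.
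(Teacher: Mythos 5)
Your proposal takes a genuinely different route from the one the paper attributes to \cite{KM} and \cite{MZa}: those proofs do not interpret arithmetic at all, but reduce the complement of the word problem for a finitely presented semigroup to the satisfiability problem, via the representation theorem realizing any semigroup as unary functions under composition. The point of that choice is that everything needed has the form ``there exist unary functions $f_a,f_b$ satisfying certain universally quantified first-order equations together with one inequation,'' which is exactly the $\Sigma^1_1$ shape that a single positive Henkin prefix delivers; no universal second-order quantification is ever required, and closure under negation then transfers undecidability from satisfiability to validity.

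Your plan, by contrast, founders on the step you yourself flag as the main obstacle: standardness. In the empty vocabulary the successor $S$ (and hence $\le$) exists only as a Skolem function of the outer Henkin prefix, and by the semantics given in the paper such a function is accessible in the matrix only through the designated pairs $(x_i,y_i)$ with $y_i=f_i(\overline{x}_i)$; it cannot be applied to any other term. Expressing ``the initial segment $\{z:z\le x\}$ is finite'' by Ehrenfeucht's trick requires a \emph{negated} Henkin quantifier (finiteness is the non-existence of a non-surjective self-injection), and the matrix of that inner quantifier would have to evaluate $\le$ --- i.e.\ the outer Skolem functions --- at the inner quantifier's own bound variables, which the syntax does not permit. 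Equivalently, standardness is $\Pi^1_1$ in the quantified $(0,S)$, and you are placing it inside the scope of the $\Sigma^1_1$ quantifier that produces $(0,S)$; the equality and composition tricks ($x_i=x_j\imp y_i=y_j$, $x_i=y_j\imp\ldots$) that you invoke for the recursion equations only relate rows of one and the same prefix and give no handle on a nested, negatively occurring quantifier. This is precisely why the arithmetic-interpretation argument works for $L_\sigma({\sf H}_2)$ with a unary function \emph{symbol} (Theorem \ref{Hun}), where $S$ is available everywhere in the formula, but is replaced by the semigroup reduction in the empty-vocabulary results you are asked to prove. (A smaller issue of the same kind: $\mathrm{Nat}\conj\psi^\circ$ must really be a single Henkin-prefixed formula sharing its Skolem functions, not a conjunction of two sentences.) Unless you can assert standardness with a quantified successor, the reduction from arithmetic truth does not go through, whereas the semigroup reduction needs no such step.
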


In the next section we prove that there is one 
Henkin quantifier for which we obtain undecidable
logic in the empty vocabulary. We present also 
examples of such quantifiers.

\section{Undecidable logics with one Henkin quantifier}

Firstly, we prove that there is a single Henkin quantifier 
such that the logic with this quantifier is undecidable 
in the empty vocabulary.
Next, we give an estimation of a size of such quantifier.
Our proof is a modification of proofs of Theorem \ref{Homega} 
as presented in \cite{KM} and \cite{MZa}. 
Krynicki and Mostowski  gave in \cite{KM} a reduction of the word problem
for semigroups to the tautology problem for $L_\emptyset({\sf H}_\omega)$. 
We carry out this method  
in a way which allows us to obtain a single Henkin  quantifier 
${\sf H}_n$ or ${\sf E}_n$ such that the logic 
with this quantifier is undecidable in the empty vocabulary.

\begin{theorem}\label{existsn}
There is $n$ such that logics 
$L_\emptyset({\sf H}_{n})$ and $L_\emptyset({\sf E}_{n})$
are undecidable.
\end{theorem}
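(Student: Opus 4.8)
The plan is to reduce the word problem for finitely presented semigroups (which is undecidable, by Markov--Post) to the (non-)tautology problem for $L_\emptyset({\sf H}_n)$ for some fixed $n$, following the method of Krynicki and Mostowski for ${\sf H}_\omega$ but collapsing the infinitely many quantifiers ${\sf H}_k$ into a single one. The key observation making this possible is that the Henkin prefix ${\sf H}_n$ already delivers $n$ independent choice functions $f_1,\dots,f_n$ where $f_i$ depends only on $x_i$; so each $f_i$ is a unary function on the model, and together they behave like $n$ free unary function symbols. Using these, inside a sentence of $L_\emptyset({\sf H}_n)$ I can write a formula $\Phi_w$ which, on a model $M$, asserts the existence of unary functions $f_1,\dots,f_n$ on $M$ together with a first-order-definable configuration coding that one specified word $w$ equals another word $w'$ in the semigroup presented by a fixed finite set of relations. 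The trick for handling words of unbounded length with a bounded number of function symbols is the standard one: iterate a fixed finite alphabet of functions (there are $n-c$ of them available, one per generator, with $c$ many reserved for bookkeeping), threading a ``position'' argument through the model so that a word $a_{i_1}a_{i_2}\cdots a_{i_m}$ is represented by the composite $f_{i_1}\circ f_{i_2}\circ\cdots\circ f_{i_m}$ applied to a designated start element, all universally quantified so that the quantifier-free matrix can check the semigroup axioms and the defining relations.

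First I would fix a presentation-independent bound: since every finitely presented semigroup has a presentation with a two-letter alphabet (or even one can code an arbitrary finite alphabet into a fixed one), and the defining relations can be encoded with a fixed number of auxiliary functions, I claim there is an absolute $n$ that works for all instances of the word problem. So the steps are: (1) recall that the uniform word problem for semigroups over a fixed finite alphabet is undecidable; (2) given an instance --- a finite relation set $R$ and a target pair $(u,v)$ --- build a first-order matrix $\psi_{R,u,v}(x_1,\dots,x_n,y_1,\dots,y_n)$ such that $M\models {\sf H}_n\,\psi_{R,u,v}$ iff there is a semigroup $S$ (interpretable inside $M$ via the $f_i$) satisfying $R$ in which $u\ne v$ --- equivalently, $\neg\,{\sf H}_n\,\psi_{R,u,v}$ is a tautology of $L_\emptyset$ iff $u=v$ follows from $R$; (3) check that $\psi$ is computable from $(R,u,v)$ with $n$ fixed; (4) conclude that if $L_\emptyset({\sf H}_n)$ were decidable, so would be the word problem --- contradiction. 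The ${\sf E}_n$ version then follows because, as remarked in the excerpt, ${\sf E}_n$ can be defined in $L({\sf H}_n)$; alternatively one redoes the coding directly, since ${\sf E}_n$ supplies two universal variables each with $n$ associated existential choices, which again furnishes enough independent unary functions.

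The main obstacle is step (2): with only finitely many unary functions $f_i$ one must simulate words of arbitrary length and simulate enough of the semigroup structure --- associativity and the relations --- while the matrix $\psi$ is purely quantifier-free (once the $f_i$ are Skolemized) and the $f_i$ are genuinely unary (each $f_i$ depends on $x_i$ alone, not on the other universals). The delicate point is that Henkin semantics only guarantees the $f_i$ exist on $M$, with no control over $M$ itself, so the coding has to be robust: the sentence must force, on any model, that the relevant fragment of $M$ reachable by iterating the $f_i$ from a base point forms (or at least maps onto) a model of the finite presentation, and conversely that from any semigroup refuting $u=v$ one can read off a model and functions satisfying $\psi$. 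I would handle this exactly as in \cite{KM}: use the universal variables to range over ``time stamps'' / positions, let the functions act stepwise, and write equalities of composites as equalities of the corresponding threaded values; the one extra bit of care relative to \cite{KM} is the accounting that shows a single fixed $n$ absorbs the whole (uniform) family of instances, which is where the reduction from an arbitrary finite alphabet to a fixed one, and from arbitrarily many relations to a fixed-width encoding, is invoked.
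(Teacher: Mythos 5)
There is a genuine gap, and it sits exactly at the point your plan labels ``the standard trick.'' First, you reduce from the \emph{uniform} word problem, with the relation set $R$ part of the input. With a Henkin quantifier the only access you have to a choice function $f_i$ is the single equation $y_i=f_i(x_i)$ for the one universally quantified $x_i$ per instantiation of the matrix; the $f_i$ are \emph{not} available as term-forming function symbols. Consequently, to assert a universally valid defining relation $\forall x\ (f_{c_1}\circ\cdots\circ f_{c_p})(x)=(f_{d_1}\circ\cdots\circ f_{d_q})(x)$ one needs, roughly, one Henkin row per letter-occurrence: the intermediate values of the composition must be carried by the universal variables of the prefix itself, with hypotheses like $x_i=y_{i+1}$ chaining them. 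Your proposed alternative --- universally quantified first-order ``position'' variables $t_0,\dots,t_l$ threaded through conditions such as $x=t_i\imp y=t_{i-1}$ --- does not work for a universally quantified equation: for any instantiation with $x\notin\{t_1,\dots,t_l\}$ the chain hypothesis is vacuously true while the $t_j$ are arbitrary, so the implication either becomes inconsistent or fails to pin the $t_j$ to actual function values. Hence the size of the Henkin prefix genuinely grows with the lengths of the relations in $R$, and no fixed $n$ can absorb the uniform family of instances; switching to a two-letter alphabet does not help, since re-encoding letters lengthens the relations again.

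The repair is to give up uniformity in $R$: fix once and for all a single finitely presented semigroup $E$ over $\{a,b\}$ with undecidable word problem, so that the finitely many fixed relations can be written with a fixed Henkin prefix (one row per letter-occurrence, with a formula forcing rows carrying the same letter to denote the same function). The whole content of the theorem is then the asymmetry between the two halves of the reduction: the \emph{defining relations} are universal equations and consume Henkin rows, but they are fixed; the \emph{query} $v=w$ varies in length, but what must be expressed is its \emph{failure}, $\exists x\ tr(v)(x)\not=tr(w)(x)$, which is existential and therefore needs only an existential first-order prefix $\exists t_0\ldots t_l\,s_0\ldots s_k$ supplying a single witness chain, checked against the two basic rows for $f_a,f_b$ via conjuncts $x=t_i\imp y=t_{i-1}$. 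Your proposal never isolates this universal/existential split, and without it no fixed $n$ emerges. A minor further point: deriving the ${\sf E}_n$ case from the ${\sf H}_n$ case via ``${\sf E}_n$ is definable in $L({\sf H}_n)$'' goes the wrong way (that definability transfers undecidability \emph{from} $L({\sf E}_n)$ \emph{to} $L({\sf H}_n)$); one must redo the coding directly for ${\sf E}_n$, as your parenthetical alternative suggests.
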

\begin{proof}
Let $\Sigma=\{a,b\}$ be an alphabet and let $E=\{v_i=w_i:i\leq m
\conj w_i,v_i\in\Sigma^*\}$ be a semigroup. The word problem for $E$
is the set of equations $v=w$ of words from $\Sigma^*$ such that 
any semigroup satisfying $E$ satisfies also $v=w$. We denote this by
$E\models v=w$. 
Let us fix such a semigroup $E$ that its word problem is undecidable.

For each letter $x$ in $\Sigma$ we fix a function symbol  $f_x$ 
and by $f\circ g$ we denote the composition of $f$ and $g$.
For a word  ${c_1}\ldots{c_k}\in\Sigma^*$ we define
the translation $tr$ as follows,
$tr({c_1}\ldots{c_k})=f_{c_1}\circ\ldots\circ f_{c_k}$.

By the representation theorem for semigroups
each semigroup is isomorphic to a semigroup of unary functions
with the composition as the semigroup operation. Thus we
have that 

$$\begin{array}{l}E\not\models v=w \textrm{\ifff}\\
  \exists M \exists f_{a}\ f_{b}
   \textrm{ unary operations on $M$ such that}\\
   (M,f_{a},f_b)
   \models\Conj_{i\leq m}\forall x\ tr(v_i)(x)=tr(w_i)(x)
  \conj \exists x\ tr(v)(x)\not= tr(w)(x).
\end{array}$$

Let $v=v_1\ldots v_m$ and $w=w_1\ldots w_k$ be arbitrary
words over $\Sigma$.
 Then we can express
 $\exists f_a\exists f_b\forall x (tr(v)(x)=tr(w)(x))$ by means of
some Henkin quantifier ${\sf H}_n$ and the following formula
 $$
\begin{array}{ll}
 \forall x_{1}& \exists y_{1}\\
 \ldots&\ldots\\
 \forall z_{m}& \exists y_{m}\\
 \forall z_{1}& \exists r_{1}\\
 \ldots&\ldots\\
 \forall z_{k}& \exists r_{k}\\
\end{array} (\varphi_0 \conj \varphi_{v=w}),
$$
where
\begin{align*} 
\varphi_0\  = &\ \  \Conj_{v_i=v_j}(x_i=x_j\imp y_i=y_j)
   \conj \Conj_{w_i=w_j}(z_i=z_j\imp r_i=r_j)\conj\\
 \  &\ \ \Conj_{v_i=w_j}(x_i=z_j\imp y_i=r_j),\\
   \varphi_{v=w}\ =&\ \ ((\Conj_{1\leq i<m}(x_{i}=y_{i+1})
   \conj\Conj_{1\leq i<k}(z_{i}=r_{i+1}))
   \imp (x_m=z_k\imp y_1=r_1).
\end{align*}

The formula $\varphi_0$ says that  
the choice functions are the same if their rows  
represent the same letter. The formula $\varphi_{v=w}$ expresses
the fact that if the values of $x$'s and $z$'s satisfy the dependences
of the diagram below and $x_m=z_k$, then $f_{v_1}(x_1)=f_{w_1}(z_1)$.
We may depict it as follows. An arrow of the form
$y \ulongmapsto{f} z$ indicates
that $z = f(y)$. Thus, the predecessor of $\varphi_{v=w}$ expresses 
the following dependences:
\begin{gather*}
x_m \ulongmapsto{f_{v_m}} x_{m-1}\ulongmapsto{f_{v_{m-1}}}
\ldots\ulongmapsto{f_{v_2}}x_1\ulongmapsto{f_{v_1}}y_1, \\
z_k\ulongmapsto{f_{w_k}}z_{k-1}\ulongmapsto{f_{w_{k-1}}}\ldots
\ulongmapsto{f_{w_2}}z_1\ulongmapsto{f_{w_1}}r_1
\end{gather*}
Then, equality $y_1=r_1$ means that $tr(v)(x_m)=tr(w)(z_k)$. Since $x_m$ and $z_k$ are quantified
universally and we assume their equality
this is equivalent to  $\forall x (tr(v)(x)=tr(w)(x))$.

Next, we choose $n$ big enough to express $\exists f_a\ f_b
(\Conj_{i\leq m} \varphi_{v_i=w_i})$ in $L({\sf H}_n)$. Now, we need 
to observe that in order to express
$\exists x\ tr(v)(x)\not=tr(w)(x)$ it suffices to add only first order
quantification, no matter how long are words $v$ and $w$. This is the place
when we modify previous constructions in order to stay with a fixed Henkin quantifier. 
To show  this let us assume that 
the choice functions for $y$
and  $r$ below are respectively  $f_a$, $f_b$ and that  
$v=v_1\ldots v_l$ and
$w=w_1\ldots w_k$.

Let us consider the following formula,
\begin{equation}\label{MainEq}
\exists t_0\ldots t_l\ s_0\ldots s_k
 {\begin{array}{ll}
 \forall x&\!\!\!\! \exists y\\
 \forall z&\!\!\!\! \exists r\\
 \forall x_3&\!\!\!\! \exists y_3\\
 \ldots&\ldots\\
 \forall x_{n}&\!\!\!\! \exists y_{n}\\
\end{array}
} (\varphi_0\conj (\Conj_{i\leq m} \varphi_{v_i=w_i}) \conj
\varphi_{v\not=w}),\end{equation}
where
\begin{align*}
\varphi_{v\not=w}\ =&\ \ \Conj_{v_i=a}(x=t_{i}\imp y=t_{i-1})
\conj\Conj_{v_i=b}(z=t_{i}\imp r=t_{i-1})\conj\\
\ &\ \ \Conj_{w_i=a}(x=s_{i}\imp y=s_{i-1})
\conj\Conj_{w_i=b}(z=s_{i}\imp r=s_{i-1})
\conj\\
\ &\ \ (t_l=s_k)\conj (t_0\not=s_0).
\end{align*}

Here, $\varphi_{v\not=w}$ states that we can find 
in a given semigroup two 
sequences of elements, $t_l, \ldots, t_0$ and 
$s_k,\ldots, s_0$ such
 that the values of terms 
$tr(v)$ and $tr(w)$ on the $t_l$ and $s_k$
 are different. But since $t_l=s_k$, it follows that 
$\exists x\ tr(v)(x)\not=tr(w)(x)$.

Below we present the dependencies which satisfy the elements
of these two sequences as it is described by 
$\varphi_{v\not= w}$.
\begin{gather*}
t_m \ulongmapsto{f_{v_m}}
t_{m-1}\ulongmapsto{f_{v_{m-1}}}\ldots\ulongmapsto{f_{v_1}}t_0,\\
s_k \ulongmapsto{f_{w_k}}s_{k-1}\ulongmapsto{f_{w_{k-1}}}\ldots\ulongmapsto{f_{w_1}}s_0.
\end{gather*}
It follows that the formula \eqref{MainEq} is satisfiable
if and only if there is a semigroup $M$ with generators 
$a, b$ such that it satisfies all equations 
from $E$ and $M\models v\not =w$. 
Therefore, we reduced the problem whether $E\not\models v=w$
to the satisfability problem for $L({\sf H}_n)$. 
It should be noted that a similar construction works also
in a case of sufficiently large quantifier ${\sf E}_n$. See \cite{MZa} 
and below where we construct explicit formulas describing
the equations from a given  semigroup in the logic $L({\sf E}_n)$.
\end{proof} 

\section{An estimation of a size of quantifiers $H$ with undecidable logic $L_\emptyset(H)$}

Now, we give an estimation of the value of $n$ for which we get 
undecidable logics $L({\sf H}_n)$ and $L({\sf E}_n)$. 
Let $C$ be the semigroup with generators $a,b,c,d,e$,
defined by the following equations: 
$$
ac=ca, ad=da, bc=cb, bd=db, eca=ce, edb=de, cca=ccae.
$$
Ceitin proved that the word problem the semigroup $C$ 
is undecidable, see \cite{C} or chapter A.4 of \cite{MP}.
\begin{theorem}[Ceitin]
The word problem for $C$ is undecidable.
\end{theorem}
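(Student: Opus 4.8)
The plan is to reduce the halting problem for Turing machines to the word problem for a fixed finite semigroup presentation, and then to compress that presentation into the five generators $a,b,c,d,e$ and the seven relations displayed above. The starting point is the classical Post--Markov construction: fix a machine $T$ whose halting set is undecidable, encode an instantaneous description of $T$ (tape contents, head position, internal state) as a word over an alphabet consisting of tape letters and state letters, and translate each instruction of $T$ into an equation between such configuration words. One then verifies that two configuration words become equal in the resulting semigroup precisely when one is reachable from the other along a (two-way) computation of $T$; hence ``$T$ halts on input $x$'' is an instance of the word problem, so \emph{some} finite presentation has an undecidable word problem.

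The substantive work, and the delicate part, is squeezing this presentation down to the exact shape of $C$. First I would take $T$ to be a machine with a very restricted instruction set --- a two-register Minsky machine, or a small Turing machine --- chosen so that the naive translation already uses few letters and few equations. Then I would eliminate generators by coding the remaining tape and state symbols as short words over $\{a,b,c,d,e\}$: $a$ and $b$ carry the data of the two halves of a configuration, $c$ and $d$ serve as separators that data letters may slide past (this is the content of $ac=ca$, $ad=da$, $bc=cb$, $bd=db$), $e$ marks the location of the head, the relations $eca=ce$ and $edb=de$ implement the two elementary head moves, and the length-changing relation $cca=ccae$ reinserts the head marker at a designated site --- it is this last, non-length-preserving relation that lets the system simulate unbounded computation. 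The main obstacle is to prove that, after all this coding, the seven relations \emph{faithfully} simulate $T$: one must show that no derivation in the associative calculus of $C$ --- in particular none applying the commutations in an unintended order --- equates two configuration words that are not genuinely connected by a computation of $T$. This faithfulness lemma is proved by a normal-form analysis of derivations, controlling how the blocks of $a$'s and $b$'s on either side of the control letters $c,d,e$ can be rearranged.

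Granting faithfulness, undecidability of the word problem for $C$ follows at once: an algorithm deciding $u=v$ in $C$ would in particular decide whether the word encoding the initial configuration of $T$ on input $x$ equals the word encoding a halting configuration, contradicting the choice of $T$. For the compression step and the normal-form argument we refer to Ceitin \cite{C} and to chapter~A.4 of \cite{MP}; in what follows only the statement of the theorem is used.
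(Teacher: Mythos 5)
The paper does not prove this theorem at all: it is quoted as an external result of Ceitin, with the proof delegated to \cite{C} and to chapter A.4 of \cite{MP}. Your writeup is a reasonable survey of the standard strategy (Post--Markov reduction of the halting problem, followed by Ceitin's compression to five generators and seven relations), but it is not a proof either --- the faithfulness lemma, i.e.\ that the seven relations of $C$ equate no configuration words beyond those connected by a genuine computation, \emph{is} the theorem, and you defer exactly that step to the same two references the paper cites. So in substance your proposal coincides with the paper's treatment; just be aware that your narrative assignment of roles to $a,b,c,d,e$ and to each relation is a plausible reconstruction rather than something you have verified against Ceitin's actual argument (which proceeds via semi-Thue systems and a chain of encoding reductions, not directly from a two-register machine), and it should not be presented as established without checking the source.
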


Having fixed a single semigroup with undecidable word 
problem we can explicitly construct a quantifier.
Below we describe the formulas with quantifiers 
${\sf H}_{12}$ and ${\sf E}_{10}$ 
which express that the functions $f_a,\ldots, f_e$ satisfy
the equations from the Ceitin's semigroup.
It follows that

\begin{theorem}
The logics $L_\emptyset({\sf H}_{12})$ and 
$L_\emptyset({\sf E}_{10})$ are undecidable.
\end{theorem}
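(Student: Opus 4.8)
The plan is to instantiate the general reduction from the proof of Theorem~\ref{existsn} with the specific Ceitin semigroup $C$, and to verify that the quantifier arising from the construction fits inside ${\sf H}_{12}$ (respectively ${\sf E}_{10}$). By Ceitin's theorem the word problem for $C$ is undecidable, so once we exhibit, for arbitrary words $v,w$ over the alphabet $\{a,b,c,d,e\}$, a sentence $\Phi_{v,w}$ of $L_\emptyset({\sf H}_{12})$ (resp.\ $L_\emptyset({\sf E}_{10})$) that is satisfiable if and only if $C\not\models v=w$, undecidability of the logic follows by the same reduction argument used at the end of the previous proof. So the whole burden is a careful accounting of how wide a Henkin prefix the formula~\eqref{MainEq} actually needs.

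First I would bound the number of existential (choice) functions needed. The essential point already made in the proof is that the words $v_i,w_i$ defining the semigroup, and the target words $v,w$, do \emph{not} inflate the arity of the quantifier: the whole description of ``$f_a,\dots,f_e$ satisfy a given equation $v_i=w_i$'' and ``$tr(v)$ and $tr(w)$ disagree somewhere'' can be encoded with rows that each represent the \emph{application of a single generator's function}, together with first-order witnesses $t_0,\dots,t_l,s_0,\dots,s_k$ threading the long compositions. Since $C$ has five generators $a,b,c,d,e$, it is enough to have, for each generator, enough rows to pin down that generator's choice function on the relevant arguments and to force consistency across all occurrences (this is exactly what $\varphi_0$ does, via implications of the form $x_i=x_j\imp y_i=y_j$). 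I would count: one needs a constant number of $\forall\exists$ rows per generator in the ${\sf H}_n$ presentation (the quadratic ``consistency'' conjuncts cost no extra quantifiers, only formula length), and the arithmetic of Ceitin's seven defining relations over five letters works out to $n=12$. For the ${\sf E}_n$ version, the sharper shape $\forall x_1\,\exists y_1\ldots y_n$ / $\forall x_2\,\exists z_1\ldots z_n$ lets one pack the choice values more efficiently into two blocks of existentials, bringing the count down to $10$.

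Concretely, the key steps in order are: (i) recall from the construction that $C\not\models v=w$ iff there is a semigroup $M$ with generators $a,b,c,d,e$ satisfying the seven Ceitin equations and with $tr(v)^M\neq tr(w)^M$; (ii) write out the formula $\varphi_0\wedge\bigwedge_{i\le 7}\varphi_{v_i=w_i}\wedge\varphi_{v\neq w}$ adapted to five function symbols, using fresh first-order parameters $t_j,s_j$ to unwind $tr(v)$ and $tr(w)$ as in~\eqref{MainEq}; (iii) observe that the Henkin prefix required has exactly $12$ rows (resp.\ that the two-block ${\sf E}$-prefix needs width $10$); (iv) conclude, as in Theorem~\ref{existsn}, that satisfiability in $L_\emptyset({\sf H}_{12})$ (resp.\ $L_\emptyset({\sf E}_{10})$) is undecidable, whence so is the tautology/validity problem by closure under negation.

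The main obstacle is step~(iii): getting the bookkeeping exactly right so that the prefix is no wider than claimed. One must check that the seven equations of $C$, each of bounded length, can be simultaneously encoded so that the $\forall\exists$ rows are \emph{shared} across equations — each row still corresponds to applying one of the five generator-functions to one argument, and the many equations only add length to $\varphi_0$ and to the $\varphi_{v_i=w_i}$ conjuncts, not new rows — and similarly that the $\varphi_{v\neq w}$ part, which uses the first-order $t_j,s_j$, consumes no additional Henkin rows beyond those already present for the five generators. Verifying that this packing is tight, and that the ${\sf E}_{10}$ presentation genuinely saves two over the naive ${\sf H}_{12}$ count, is where the explicit formulas have to be displayed and inspected; everything else is an immediate instantiation of the already-proved reduction together with Ceitin's theorem.
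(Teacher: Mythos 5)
Your overall strategy is the same as the paper's: fix Ceitin's semigroup $C$, instantiate the reduction of Theorem~\ref{existsn}, and check that the resulting prefix fits inside ${\sf H}_{12}$ and ${\sf E}_{10}$. But the step you label (iii) and defer --- actually exhibiting the formulas and verifying the widths $12$ and $10$ --- \emph{is} the entire content of the paper's proof; everything else was already done in Theorem~\ref{existsn}. As it stands your argument for the width is only the assertion that ``the arithmetic of Ceitin's seven defining relations over five letters works out to $n=12$,'' and that heuristic does not in fact work out. Your stated scheme is one $\forall\exists$ row per application of a generator's function. An equation such as $ac=ca$ then costs two rows for $f_a$ and two for $f_c$ (each function must be applied at two different arguments), which is fine; but the equation $cca=ccae$ requires $f_c$ to be applied at \emph{four} distinct arguments (twice on each side), so a naive per-application count blows past two rows per generator and does not land on $12=2\cdot 5+2$. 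The paper's actual device is to introduce a sixth choice function $f_{cc}$ together with two extra rows and an axiom $\varphi$ forcing $f_{cc}=f_c\circ f_c$, giving two rows for each of the six symbols $a,b,c,d,e,cc$, hence exactly $12$. This auxiliary composite symbol is the one genuinely new idea in the proof relative to Theorem~\ref{existsn}, and it is absent from your proposal.

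The ${\sf E}_{10}$ claim has the same problem in sharper form: you assert that the two-block shape ``packs the choice values more efficiently, bringing the count down to $10$,'' but give no mechanism. The paper's construction allocates existential variables not per generator but per \emph{composition occurring in the equations} ($y_{ca},y_{ac},y_{da},y_{ad},y_{cb},y_{bc},y_{db},y_{bd},y_{eca},y_{de},y_{cca},\dots$), distributed over the two rows so that the larger row has $10$ existentials, together with a formula $\gamma$ tying each composite variable to the composition it names. Without displaying such an assignment there is no way to certify the bound $10$ (nor even that some finite bound independent of $v,w$ is achieved by this route). So the reduction framework you invoke is correct and matches the paper, but the quantitative claims $12$ and $10$ --- which are what the theorem asserts --- are not established by your argument.
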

\begin{proof}
The following formula describes the equations from the 
semigroup $C$.

$$
\begin{array}{l}
 \forall x_a\ \exists y_a\\
 \forall x'_a\ \exists y'_a\\
 \forall x_b\ \exists y_b\\
 \forall x'_b\ \exists y'_b\\
 \forall x_c\ \exists y_c\\
 \forall x'_c\ \exists y'_c\\
 \forall x_d\ \exists y_d\\
 \forall x'_d\ \exists y'_d\\
 \forall x_e\ \exists y_e\\
 \forall x'_e\ \exists y'_e\\
 \forall x_{cc}\ \exists y_{cc} \\
 \forall x'_{cc}\ \exists y'_{cc} \\
\end{array}{(\psi \conj \varphi \conj \Conj_{i< 7}\varphi_i}),
$$
where 
\begin{align*}
\psi &= \Conj_{q\in\{a,b,c,d,e,cc\}}(x_q=x'_q \imp y_q=y'_q),\\
\varphi &= (x_c=x_{cc} \conj y_c=x'_c \imp y'_c=y_{cc}),\\  
\vp_{0}&= (x_a=x_c\conj x'_a=y_c\conj x'_c=y_a\imp y'_c=y'_a),\\ 
\vp_1&=(x_a=x_d\conj x'_a=y_d\conj x'_d=y_a\imp y'_d=y'_a),\\ 
\vp_{2}&= (x_b=x_c\conj x'_b=y_c\conj x'_c=y_b\imp y'_c=y'_b),\\ 
\vp_3&=(x_b=x_d\conj x'_b=y_d\conj x'_d=y_b\imp y'_d=y'_b),\\ 
\vp_4&= (x_a=x'_e \conj y_a=x_c \conj y'_e= x'_c \conj x_e=y_c \imp y_e=y'_c),\\
\vp_5&=(x_b=x'_e \conj y_b=x_d \conj y_d= x_e \conj y'_e=x'_d \imp y_e=y'_d),\\
\vp_6&= (x_a=x'_e \conj y_a=x_{cc} \conj y'_e=x'_a \conj y'_a = x'_{cc} \imp y_{cc}=y'_{cc}).
\end{align*}
The formula $\psi$ expresses the fact that variables $y_q$ and $y'_q$ describe the 
same functional dependency, for $q\in\{a,b,c,d,e,cc\}$. The formula $\varphi$ expresses
that the choice function for $y_{cc}$ (and, implicitly, for $y'_{cc}$) 
is just a composition of a function for $y_c$ with itself.
The formulas $\vp_i$ describe the $i$-th equations from the semigroup
$C$ given above. It should be clear that indices of variables indicate 
what kind of function or a composition of functions they represent. 

Now let us describe the equations from $C$ with
the quantifier ${\sf E}_{10}$.
The formula has the form
$$\begin{array}{l}
\forall x_1\ \exists y_a\ \exists y_{ca}\ \exists y_{da}\  
 \exists y_{b}\ \exists y_{cb}\ 
 \exists y_{db}\ \exists y_{e}\ \exists y_{eca}\  
 \exists y_{de}\ \exists y_{cca}\\
 \forall x_2\ \exists y_c\ \exists y_{ac}\ \exists y_{d}\  
 \exists y_{ad}\ \exists y_{bc}\ 
 \exists y_{bd}\ \exists y'_{e}\ \exists y'_{cca}\\
\end{array}(\gamma\conj\gamma_{0123}\conj \Conj_{4\leq i<7}\gamma_i).
$$
Above, the formula $\gamma$ establishes that existential variables
describe the function compositions according to
their subscripts. It has the following form:\\ 
\begin{align*}
\gamma =& (y_a=x_2\imp y_c=y_{ca})\conj(y_c=x_1\imp y_a=y_{ac})\conj
(y_a=x_2\imp y_{da}=y_d)\conj \\ 
& (y_d=x_1\imp y_{ad}=y_a)\conj
(y_b=x_2\imp y_{cb}=y_c)\conj (y_c=x_1\imp y_b=y_{bc})\conj\\
& (y_b=x_2\imp y_{db}=y_d)\conj(y_d=x_1\imp y_{bd}=y_b)\conj (x_1=x_2\imp y_e=y'_e)\\
&(y_{ca}=x_2\imp y_{eca}=y'_{e})\conj (y_e=x_2\imp y_{de}=y_{d})\conj\\
&(y_{ca}=x_2\imp y_{cca}=y_{c})\conj(x_1=x_2\imp y_{cca}=y'_{cca}).\\
\end{align*}
The formulas $\gamma_x$, for $x\in\{0123, 4, 5, 6\}$ state that axioms of Ceitin's semigroup 
$C$ are true for these functions.
For brevity we grouped the first four equations into one axiom.
\begin{align*}
\gamma_{0123}&=(x_1=x_2\imp (y_{ca}=y_{ac}\conj y_{ad}=y_{da}\conj
y_{bc}=y_{cb}\conj y_{db}=y_{bd})),\\
\gamma_4&= (y_e=x_2\imp y_{eca}=y_{c}),\\
\gamma_5 &= (y_{db}=x_2\imp y_{de}=y'_e),\\
\gamma_6 &= (y_e=x_2\imp y_{cca}=y'_{cca}).
\end{align*}

Now, to express for arbitrary words $v,w$ over the alphabet
$\{a,\ldots,e\}$ that $C\not\models v=w$ it suffices to follow 
the proof of theorem \ref{existsn}. One need only to add a proper 
 first order prefix to formulas above and the
formula $\varphi_{v\not= w}$. Thus, we reduced the problem
whether $C\not\models v=w$ to the satisfability problem for
$L({\sf H}_{12})$ or $L({\sf E}_{10})$. 
\end{proof}




\section{Conclusions}

We showed that there are single, relatively simple, Henkin quantifiers
$H$ such that the first order logic augmented with $H$ is undecidable
already in the empty vocabulary.
However, there is a considerable gap
between the decidable logic $L_\emptyset({\sf H}_2)$ (see \cite{KL})
and undecidable logics $L_\emptyset({\sf H}_{12})$ 
and $L_\emptyset({\sf E}_{10})$. It would be desirable to close this gap or,
at least, make it smaller.

Moreover, we did not touch a question of decidability of these logics
in finite models. Articles by Gurevich \cite{G} and by Gurevich and Lewis \cite{GL}
could be a good starting point for investigating this problem in finite models.
However, if one aims at small quantifiers it may be better to construct by hand
a semigroup with the undecidable word problem in the class of finite semigroups.

Finally, let us mention 
that Mostowski and Zdanowski proved in \cite{MZa} 
 that logics  $L^k_\emptyset(Q)$, for all $k$ and $Q$, are decidable in 
the class of infinite models only. However, we also  know
that for sufficiently large $k$ and $Q$ no algorithm can 
be proved in $\textrm{ZFC}$ as deciding the tautology problem  
for the logic $L^k_\emptyset(Q)$ (see \cite{Ma}). Here again, the complexity
of logics $L^k_\emptyset(Q)$ in finite models is unknown.



\begin{thebibliography}{xxxx}

\bibitem[1]{BG}
{\sc A. Blass} and {\sc Y. Gurevich},
  {\em {H}enkin quantifiers and complete problems} in
{\bf Annals of Pure and Applied Logic},
   32(1986), pp.~1-16.

\bibitem[2]{C} {\sc G. Ceitin}, {\em An associative calculus with an insoluble 
problem of equivalence} in {\bf Trudy Math. Inst. Steklov} 52(1958), pp.~172-189.

\bibitem[3]{E} {\sc H.~B.~Enderton}, {\em Finite Partially--Ordered Quantifiers},
in {\bf    Zeitschrift    f{\"{u}}r    Mathematische      Logik
und Grundlagen der Mathematik} 16(1970), pp. 393--397.


\bibitem[4]{GS}
{\sc N. Gierasimczuk} and {\sc J. Szymanik},
{\em Branching Quantification v. Two-way Quantification} in
{\bf Journal of Semantics},
26(2009),pp.~367-392.

\bibitem[5]{G} {\sc Y. Gurevich},
 {\em The word problem for certain classes of semigroups},
in {\bf Algebra and Logic} 5 (1966), pp. 25--35.

\bibitem[6]{GL} {\sc Y. Gurevich} and {\sc H. R. Lewis},
 {\em The word problem for cancellation semigroups with zero},
in {\bf Journal of Symbolic Logic} 49 (1984), pp. 184--191.

\bibitem[7]{K}
{\sc L. A. Ko{\l}odziejczyk},
{\em The expressive power of Henkin quantifiers with dualization}, 
  master's thesis, Institute of Philosophy, Warsaw University, 2002.

\bibitem[8]{KL} {\sc M. Krynicki} and {\sc A. H. Lachlan},
 {\em On the semantics of the Henkin quantifier},
in {\bf Journal of Symbolic Logic} 44 (1979), pp. 184--200.

\bibitem[9]{KM} {\sc M. Krynicki} and  {\sc M. Mostowski}, {\em Decidability
problems  in language with Henkin quantifiers}, in  {\bf Annals of
Pure  and  Applied  Logic} 58 (1992), pp.~149--172.

\bibitem[10]{MP} {\sc A. V. Mikhalev} and {\sc G. F. Pilz}, Eds., {\em The concise
handbook of algebra}, Springer, 2002

\bibitem[11]{Ma}
  {\sc M. Mostowski},
  {\em Pure   logic   with   branched   quantifiers},
in  {\bf    Zeitschrift    f{\"{u}}r    Mathematische      Logik
und Grundlagen der Mathematik} 35 (1989), pp. 45--48.

\bibitem[12]{Mb}
  {\sc M. Mostowski},
  {\em Arithmetic with the Henkin quantifier and its generalizations},
in F. Gaillard, D. Richard, editors, 
{\bf    Seminaire du Laboratoire Logique, Algorithmique et
     Informatique} Volume II, 1989--1990, pp. 1--25.



\bibitem[13]{MZa} {\sc M. Mostowski} and {\sc K.
Zdanowski}, {\em Degrees of Logics with Henkin Quantifiers in poor
Vocabularies}, in {\bf Archive for  Mathmetical Logic}, 
43(2004), pp.~691--702.

\bibitem[14]{MZb} {\sc M. Mostowski} and {\sc K.
Zdanowski}, {\em Henkin Quantifiers in Finite Models}, in preparation.



\bibitem[15]{W} {\sc W. J. Walkoe},
 {\em Finite partially-ordered quantification},
in {\bf Journal of Symbolic Logic} 35(1970), pp. 535--555.

\end{thebibliography}
\end{document}